\definecolor{gr75}{gray}{0.75}
\theoremstyle{plain}
\newtheorem{theorem}{Theorem}[section]
\newtheorem*{theorem*}{Theorem}
\newtheorem{definition}[theorem]{Definition}
\newtheorem{example}[theorem]{Example}
\theoremstyle{remark}
\newtheorem{remark}[theorem]{Remark}
\numberwithin{equation}{section}
\newcommand{\irr}{\text{Irr}}
\newcommand{\ind}{\text{Ind}\!\uparrow}
\newcommand{\tr}{\text{tr}}
\newcommand{\ch}{\text{ch}}
\newcommand{\bC}{\mathbb{C}}
\newcommand{\bK}{\mathbb{K}}
\newcommand{\bZ}{\mathbb{Z}}
\newcommand{\Sym}{\textsl{Sym}}
\newcommand{\Qsym}{\textsl{QSym}}
\newcommand{\qs}{\check{\mathcal{S}}}	
\newcommand{\qsy}{\hat{\mathcal{S}}}	
\newcommand{\Nsym}{\textsl{NSym}}
\newcommand{\nch}{\mathbf{h}}         
\newcommand{\nce}{\mathbf{e}}         
\newcommand{\ncb}{\mathbf{b}}  
\newcommand{\ncr}{\mathbf{r}}         
\newcommand{\ncs}{\check{\mathbf{s}}} 
\newcommand{\ncsy}{\hat{\mathbf{s}}}            
\newcommand{\dalg}{\mathcal{D}}   
\newcommand{\dah}{B}                       
\newcommand{\dar}{D}                         
\newcommand{\palg}{\mathcal{P}}   
\newcommand{\ralg}{\mathcal{R}}
\newcommand{\calg}{\mathcal{C}\ell}
\newcommand{\ych}[1]{\xi^{#1}}         
\newcommand{\sgrp}{\mathfrak{S}}
\newcommand\partitionof[1]{\widetilde{#1}}
\newcommand\reverse[1]{{#1}^*}
\newcommand{\refines}{\preccurlyeq}   
\newcommand{\suchthat}{\;|\;}
\newcommand{\set}{\mathrm{Set}} 
\newcommand{\des}{\mathrm{Des}} 
\newcommand{\comp}{\mathrm{comp}} 
\newcommand{\rhocy}{{\hat{\rho}}} 
\newlength\cellsize \setlength\cellsize{15\unitlength}
\newcommand\cellify[1]{\def\thearg{#1}\def\nothing{}%
\ifx\thearg\nothing
\vrule width0pt height\cellsize depth0pt\else
\hbox to 0pt{\usebox2\hss}\fi%
\vbox to 15\unitlength{
\vss
\hbox to 15\unitlength{\hss$#1$\hss}
\vss}}
\newcommand\tableau[1]{\vtop{\let\\=\cr
\setlength\baselineskip{-16000pt}
\setlength\lineskiplimit{16000pt}
\setlength\lineskip{0pt}
\halign{&\cellify{##}\cr#1\crcr}}}
\newcommand\expath[1]{%
\hbox to 0pt{\usebox3\hss}%
\vbox to 15\unitlength{
\vss
\hbox to 15\unitlength{\hss$#1$\hss}
\vss}}
\newcommand\bas[1]{\omit \vbox to \cellsize{ \vss \hbox to \cellsize{\hss$#1$\hss} \vss}}
\begin{document}

\bibliographystyle{plain}

\title[noncommutative irreducible characters of the symmetric group]{Noncommutative irreducible characters of the symmetric group  and noncommutative Schur functions}

\author{S. van Willigenburg}
\address{Department of Mathematics, University of British Columbia, Vancouver, BC V6T 1Z2, Canada}
\email{\href{mailto:steph@math.ubc.ca}{steph@math.ubc.ca}}
\thanks{
The author was supported in part by the National Sciences and Engineering Research Council of Canada.}
\subjclass[2010]{Primary 05E05, 16T30; Secondary 05E10, 16T05, 20B30, 20C30, 33D52}
\keywords{descent algebra, irreducible character, noncommutative character theory, noncommutative symmetric function, Schur function, symmetric group}

\begin{abstract}
In the Hopf algebra of symmetric functions, $\Sym$, the basis of Schur functions is distinguished since every Schur function is isomorphic to an irreducible character of a symmetric group under the Frobenius characteristic map. In this note we show that in the Hopf algebra of noncommutative symmetric functions, $\Nsym$, of which $\Sym$ is a quotient, the recently discovered basis of noncommutative Schur functions exhibits that every \emph{noncommutative} Schur function is isomorphic to a \emph{noncommutative} irreducible character of a symmetric group when working in noncommutative character theory. We simultaneously show that a second basis of $\Nsym$ consisting of Young noncommutative Schur functions also satisfies that every element is isomorphic to a {noncommutative} irreducible character of a symmetric group.
\end{abstract}

\maketitle

\section{Introduction}\label{sec:intro} In the representation theory of the symmetric group it is long-established \cite{Schur} that every Schur function is isomorphic to an irreducible character of a symmetric group under the Frobenius characteristic map. Since this was established, Schur functions have continued to be a vibrant area of research due to their appearance in areas such enumerative combinatorics through Young tableaux, plus areas such as algebraic geometry and diagonal harmonics. Throughout, Schur functions have exhibited beautiful combinatorial properties such as Pieri rules, the Littlewood-Richardson rule, and Kostka numbers.

Schur functions also form a basis for the Hopf algebra of symmetric functions, $\Sym$, which is contained in the Hopf algebra of quasisymmetric functions, $\Qsym$. Recently  a new basis for $\Qsym$ has been discovered \cite{HLMvW-1}, using the combinatorics of Macdonald polynomials \cite{HHL-1}. This new basis was termed the basis of quasisymmetric Schur functions as quasisymmetric Schur functions refine Schur functions in a natural way. Furthermore, both the basis of quasisymmetric Schur functions and their dual basis, the basis of noncommutative Schur functions in the Hopf algebra of noncommutative symmetric functions $\Nsym$, exhibit many natural generalisations of combinatorial properties of Schur functions such as Pieri rules \cite[Theorem 6.3]{HLMvW-1}, the Littlewood-Richardson rule \cite{BLvW, HLMvW-2} and Kostka numbers \cite[Theorem 6.1]{HLMvW-1}.

Therefore a pertinent question to ask is can every noncommutative Schur function be realized as being isomorphic to an irreducible character of a symmetric group? In this note we answer the question in the affirmative in Theorem~\ref{the:noncommirrchars} using \emph{noncommutative} character theory.

The theory of noncommutative character theory for the symmetric groups was initiated in \cite{solomon}. It was then developed independently by Malvenuto and Reutenauer \cite{malv-reut-1} and Gelfand et al. \cite{GKLLRT}, and brought together by J\"ollenbeck \cite{jollenbeck-1}. Other papers on the subject have also been written including \cite{baumann-hohlweg,bless-hohl-schock,bonnafe-hohlweg, NCSF6, garsia-reut, Leclerc94noncommutativecyclic, mantaci-reut,poirier-reut}.
This theory is more amenable to combinatorial manipulation involving permutations, and many classical results such as the Murnaghan-Nakayama rule, the Littlewood-Richardson rule and results on Foulkes characters are more straightforward to obtain using this theory.
An exposition can be found in the monograph \cite{bless-schock} by Blessenohl and Schocker, which will serve as our primary reference for this theory later.

With this in mind, we will spend the remainder of this note working towards establishing Theorem~\ref{the:noncommirrchars}, which in essence states the following.

\begin{theorem*} Every noncommutative Schur function is isomorphic to a noncommutative irreducible character of a symmetric group.
\end{theorem*}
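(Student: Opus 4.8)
The plan is to situate the statement within the noncommutative character theory of the symmetric groups developed in \cite{jollenbeck-1} and expounded in \cite{bless-schock}. Recall from that theory that the $\bZ$-module of noncommutative characters of $\sgrp_n$ sits inside Solomon's descent algebra $\dalg_n$, that the outer product makes $\bigoplus_{n\geq 0}\dalg_n$ into a graded connected Hopf algebra isomorphic to $\Nsym$, and that there is a noncommutative Frobenius characteristic $\mathsf{ch}$ under which the triangle formed with the canonical surjection $\Nsym\twoheadrightarrow\Sym$ and the passage from noncommutative to ordinary characters commutes; in particular the ordinary irreducible character $\chi^\lambda$ corresponds, through this commuting triangle, to the Schur function $s_\lambda$. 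First I would recall the distinguished family of noncommutative irreducible characters: for each composition $\alpha$ there is one such character $\ych{\alpha}$, its underlying ordinary character is $\chi^{\partitionof{\alpha}}$, the elements $\mathsf{ch}(\ych{\alpha})$ form a $\bZ$-basis of $\Nsym$, and $\mathsf{ch}(\ych{(n)})=\nch_{n}$ since $\ych{(n)}$ is the trivial character of $\sgrp_n$.

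The core of the argument is then to identify the basis $\{\mathsf{ch}(\ych{\alpha})\}$ with the basis $\{\ncs_\alpha\}$ of noncommutative Schur functions, which I would do through a characterization. On the representation-theoretic side, multiplying $\mathsf{ch}(\ych{\alpha})$ by $\ncr_{(1)}=\nch_{(1)}$ corresponds to inducing from a Young subgroup a step at a time, giving a right Pieri rule; this rule together with $\mathsf{ch}(\ych{(1)})=\ncr_{(1)}$ and the known commutative images determines the basis $\{\mathsf{ch}(\ych{\alpha})\}$ uniquely by triangularity, exactly as $s_\lambda\cdot s_{(1)}=\sum_{\mu}s_\mu$ (the sum over $\mu$ obtained from $\lambda$ by adding one box) pins down the classical Schur functions. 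On the combinatorial side, the Pieri rule of \cite[Theorem~6.3]{HLMvW-1} for the noncommutative Schur functions, the identity $\ncs_{(1)}=\ncr_{(1)}$, and the fact that the commutative image of $\ncs_\alpha$ is $s_{\partitionof{\alpha}}$, show that $\{\ncs_\alpha\}$ satisfies the very same conditions; hence $\ncs_\alpha=\mathsf{ch}(\ych{\alpha})$ for every composition $\alpha$, which proves the Theorem for the noncommutative Schur functions. Equivalently, one can match the two bases via their common unitriangular expansion in the complete homogeneous basis: decomposing the noncommutative permutation character $\nch_\beta$ into noncommutative irreducibles gives $\nch_\beta=\sum_\alpha c_{\alpha\beta}\,\mathsf{ch}(\ych{\alpha})$ with nonnegative integer, unitriangular multiplicities, and one checks that $c_{\alpha\beta}$ equals the number of reverse composition tableaux counted by \cite[Theorem~6.1]{HLMvW-1}.

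For the Young noncommutative Schur functions $\ncsy_\alpha$ I would argue by symmetry. The families $\{\ncs_\alpha\}$ and $\{\ncsy_\alpha\}$ are interchanged by the involution of $\Nsym$ underlying the two reading conventions for composition tableaux (as recorded in the quasisymmetric Schur function literature, e.g.\ \cite{BLvW}); on the descent-algebra side this involution is induced by an inner automorphism of each $\sgrp_n$, so it fixes the underlying ordinary characters while permuting the noncommutative irreducible characters. Since each $\ncsy_\alpha$ is the image of some $\ncs_\beta$ under this involution, and $\ncs_\beta$ has just been shown to be isomorphic to a noncommutative irreducible character, the same conclusion holds for $\ncsy_\alpha$, which completes the proof.

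I expect the main obstacle to be the matching step: one must verify that the descent-algebra Pieri rule — equivalently, the multiplicities $c_{\alpha\beta}$ in the noncommutative permutation characters — agrees composition by composition with the horizontal-strip Pieri rule, equivalently the reverse composition tableau count, of \cite{HLMvW-1}, and that the triangular system this produces really has a unique solution. This is the one place where genuine combinatorics enters; the delicate part is lining up the indexing by compositions on the two sides (reverse composition tableaux versus induction and restriction of characters), after which everything reduces to formal bookkeeping with the isomorphism $\Nsym\cong\bigoplus_n\dalg_n$ and its compatibility with $\mathsf{ch}$.
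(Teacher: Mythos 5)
Your proposal takes a much harder road than the statement requires, and the detour introduces genuine gaps. In this paper a \emph{noncommutative irreducible character} is, by Definition~\ref{def:noncommchar}, \emph{any} element of $\palg$ that $\theta$ sends to an irreducible character; there is no canonical one that must be identified. Consequently the whole theorem reduces to checking that $\theta(\psi^{-1}(\ncs_\alpha))=\chi^{\partitionof{\alpha}}$, which is immediate from the commuting square $\theta=\ch^{-1}\circ\Theta\circ\psi$ of Equation~\eqref{eq:mapcomm} together with the known fact $\Theta(\ncs_\alpha)=\Theta(\ncsy_\alpha)=s_{\partitionof{\alpha}}$ of Equation~\eqref{eq:ThetaNcsr}; that is essentially the paper's entire proof. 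Your argument never invokes this definition and instead sets out to prove the far stronger claim that $\ncs_\alpha$ \emph{equals} the image of a distinguished noncommutative irreducible character.

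That stronger claim is where the gaps lie. First, the family you posit --- one noncommutative irreducible character per composition $\alpha$, with underlying ordinary character $\chi^{\partitionof{\alpha}}$ and with images forming a $\bZ$-basis of $\Nsym$ --- is not a recalled fact of the theory but essentially the conclusion you are trying to reach; moreover your notation collides with the Young characters $\ych{\alpha}$, which are induced trivial characters satisfying $\ch(\ych{\alpha})=h_{\partitionof{\alpha}}$, not irreducible characters with image $s_{\partitionof{\alpha}}$. Second, the uniqueness-by-triangularity step is unsupported: right multiplication by $\nch_1$ maps the $2^{n-2}$-dimensional space $\Nsym^{n-1}$ into the $2^{n-1}$-dimensional space $\Nsym^{n}$, so a right Pieri rule plus a base case cannot by itself determine a basis of $\Nsym^{n}$ (even classically, $s_\lambda\cdot s_{(1)}=\sum_\mu s_\mu$ does not characterize the Schur basis without additional input such as orthonormality), and the asserted agreement of your multiplicities $c_{\alpha\beta}$ with the tableau counts of \cite{HLMvW-1} is exactly the nontrivial combinatorial content you defer rather than supply. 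The reduction of $\ncsy_\alpha$ to $\ncs_{\reverse{\alpha}}$ via the involution is fine in spirit --- the paper likewise treats the two bases in parallel, using that $\Theta$ sends both to $s_{\partitionof{\alpha}}$ --- but as written it inherits the gaps above.
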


At the same time, we will establish the following for another basis of $\Nsym$, called the basis of Young noncommutative Schur functions, whose combinatorics is related to that of Young tableaux.

\begin{theorem*} Every Young noncommutative Schur function is isomorphic to a noncommutative irreducible character of a symmetric group.
\end{theorem*}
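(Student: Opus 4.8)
The plan is to pass to the descent-algebra model of noncommutative character theory. First I would recall the graded algebra isomorphism $\bigoplus_{n\ge 0}\dalg_n\to\Nsym$ between the direct sum of Solomon's descent algebras and $\Nsym$ from \cite{solomon,malv-reut-1,GKLLRT,jollenbeck-1}, under which the noncommutative characteristic map of \cite{bless-schock} carries a noncommutative class function in $\dalg_n$ to an element of $\Nsym$ of degree $n$, and under which the canonical surjection $\Nsym\twoheadrightarrow\Sym$ corresponds to Solomon's homomorphism $\dalg_n\to R(\sgrp_n)\otimes\bQ$ followed by the inverse Frobenius characteristic. I would then extract from \cite{bless-schock} the facts needed about the noncommutative irreducible characters: that they are indexed by compositions $\alpha$ of $n$ (so that there are $2^{n-1}$ of them, matching $\dim\dalg_n$), that they form a basis of $\Nsym$, that each maps to a Schur function under $\Nsym\twoheadrightarrow\Sym$, and --- crucially --- their characterizing combinatorial description, either a branching (right Pieri) rule for multiplication by the degree-one generator, a noncommutative Murnaghan--Nakayama recursion, or the unitriangular expansion in the ribbon basis $\ncr_\beta$ (equivalently in the $\nch_\beta$) with respect to a partial order refining dominance.

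Next I would assemble the parallel facts for the two families of noncommutative Schur functions. By definition $\ncs_\alpha$ and $\ncsy_\alpha$ are the bases of $\Nsym$ dual to the bases of quasisymmetric and Young quasisymmetric Schur functions of $\Qsym$; dualising the inclusion $\Sym\hookrightarrow\Qsym$ and using \cite[Theorem 6.1]{HLMvW-1} shows $\ncs_\alpha\mapsto s_{\partitionof\alpha}$ under $\Nsym\twoheadrightarrow\Sym$ (and likewise for $\ncsy_\alpha$); and they satisfy the right Pieri rule of \cite[Theorem 6.3]{HLMvW-1} together with the Littlewood--Richardson rule of \cite{BLvW,HLMvW-2}, which with the ribbon expansion recorded in \cite{HLMvW-1} gives an explicit combinatorial formula for $\ncs_\alpha$, hence $\ncsy_\alpha$, in the ribbon basis. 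Since the Young family is obtained from the other by the algebra automorphism of $\Nsym$ dual to the automorphism of $\Qsym$ that interchanges the two Schur-like bases (reversal of the composition index together with the relevant involution), it suffices to settle one of the two families.

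The main step is the identification of the two bases. I would induct on $n$: in degree one both the noncommutative irreducible character and $\ncs_{(1)}$ equal $\ncr_{(1)}$; assuming the two bases agree in degree $n-1$ after the appropriate relabelling of the composition index, I would multiply a degree-$(n-1)$ element by $\ncr_{(1)}$ and compare the expansions on the two sides. Because the right Pieri rule of \cite[Theorem 6.3]{HLMvW-1} and the branching rule for noncommutative irreducible characters of \cite{bless-schock} have the same index set and the same $\{0,1\}$-coefficients once the relabelling is fixed, the induction closes, giving $\ncs_\alpha$ equal to the noncommutative irreducible character attached to the relabelled composition; the Young case then follows by transporting along the automorphism above. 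An alternative, essentially equivalent route is to compute the basis of $\Qsym$ dual to the noncommutative irreducible characters and to recognise it directly as the (Young) quasisymmetric Schur basis, again by a Pieri/branching comparison or by matching unitriangular expansions in the monomial or fundamental basis.

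The hard part will be the bookkeeping in this comparison rather than any single computation: reconciling the combinatorial model of \cite{bless-schock} --- where characters are built from descent classes and Young-subgroup inductions, and where the orientation of the composition index, the choice of left versus right action, and descents versus recoils all intervene --- with the composition-diagram/column-strict-filling model of \cite{HLMvW-1}. In particular one must pin down the precise involution on compositions (reversal $\reverse{\alpha}$, complementation $\ccomplement{\alpha}$, or the conjugate--transpose $\ctranspose{\alpha}$) that aligns the two indexings, and verify that the base case and the Pieri/branching coefficients agree under it --- not merely after projection to $\Sym$. Establishing this compatibility of conventions is where the real content of the argument lies.
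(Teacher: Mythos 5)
Your proposal misreads what has to be proved, and the ``main step'' you build the argument around is both unnecessary and unsupported. In this paper (Definition~\ref{def:noncommchar}, following Blessenohl--Schocker) a \emph{noncommutative character} corresponding to $\chi$ is \emph{any} preimage of $\chi$ under the homomorphism $\theta:\palg\to\calg$; it is not a distinguished basis element. In particular there is no canonical set of ``noncommutative irreducible characters, indexed by compositions, $2^{n-1}$ of them, forming a basis of $\Nsym$'' to compare against: the preimage $\theta^{-1}(\chi^\lambda)$ is a whole coset of $\ker\theta$, and the distinguished representatives constructed in \cite{bless-schock} are indexed by partitions, far too few to span $\dalg_n$. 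So the basis identification you propose to establish by a Pieri/branching induction --- that $\ncs_\alpha$ \emph{equals} a specific noncommutative irreducible character after some relabelling of compositions --- is a statement the theorem does not assert, rests on a premise that is false, and is in any case only sketched (the ``relabelling'' and the matching of branching coefficients are precisely the points you defer).

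What the theorem actually requires is only that $\theta\bigl(\psi^{-1}(\ncsy_\alpha)\bigr)$ be an irreducible character, and you already state the one fact that delivers this: $\Theta(\ncsy_\alpha)=\Theta(\ncs_\alpha)=s_{\partitionof{\alpha}}$ (Equation~\eqref{eq:ThetaNcsr}, which you obtain by dualising $\Sym\hookrightarrow\Qsym$). Combining this with the commutative square $\theta=\ch^{-1}\circ\Theta\circ\psi$ of Equation~\eqref{eq:mapcomm} and with $\ch(\chi^\lambda)=s_\lambda$ from Equation~\eqref{eq:chmap} gives $\theta(\psi^{-1}(\ncsy_\alpha))=\ch^{-1}(s_{\partitionof{\alpha}})=\chi^{\partitionof{\alpha}}$, which is the entire proof; the paper's argument is exactly this two-line computation. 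All of the convention-reconciliation you flag as ``the real content'' (reversal versus complementation, left versus right actions, descents versus recoils) never enters, because the statement is insensitive to which preimage of $\chi^{\partitionof{\alpha}}$ one lands on.
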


\subsection*{Acknowledgements} The author would like to thank Kurt Luoto for helpful conversations.

\section{Combinatorial tools}\label{sec:combstuff}

We begin by recalling the combinatorial concepts of compositions and partitions, which will be used to index the variety of functions we will subsequently encounter. A \emph{composition} is a finite list of positive integers, while a \emph{partition} is a finite unordered list of positive integers, which we list in weakly decreasing order for convenience. In both cases the integers are called \emph{parts}. We say a composition (respectively, partition) $\alpha$ is a composition (respectively, partition) of $n$, denoted by $\alpha \vDash n$ (respectively, $\alpha \vdash n$), if the parts of $\alpha$ sum to $n$. We denote by $0$  both the empty composition and empty partition of $0$. The \emph{underlying partition} of a composition $\alpha$, denoted by $\partitionof{\alpha}$, is the partition obtained from $\alpha$ by reordering the parts of $\alpha$ into weakly decreasing order. 

One composition closely related to a composition $\alpha = (\alpha _1, \ldots , \alpha_\ell)$ is the \emph{reversal} of $\alpha$, denoted by $\reverse{\alpha}$, given by $\reverse{\alpha}=(\alpha _\ell, \ldots , \alpha _1)$. We say a composition $\beta$ \emph{refines} a composition $\alpha$ if we can obtain the parts of $\alpha$ in order by adding together adjacent parts of $\beta$ in order, denoted by $\beta \refines \alpha$. For any composition $\alpha \vDash n$, say $\alpha =(\alpha_1,\ldots,\alpha_\ell)$, we associate the unique subset $\set(\alpha)\subseteq[n-1] = \{1, 2, \ldots ,n-1\}$ given by the partial sums of $\alpha$, that is, 
\[ \set(\alpha) = \left\{ \sum_{i=1}^k \alpha_i \suchthat 1\leqslant k < \ell \right\}. \]
Conversely, for $J\subseteq [n-1]$ we write $\comp(J)=\alpha$ if $\set(\alpha)=J$.

\begin{example}\label{ex:comp}
If  $\alpha=(2,4,1,2)$, then $\partitionof{\alpha}=(4,2,2,1)$,  $\reverse{\alpha}=(2,1,4,2)$, $(2,1,1,3,1,1)\refines (2,1,4,2)$ and  $\set(\alpha)=\{2,6,7\}\subset[8]$. 
\end{example}


Now we use compositions to create diagrams.

\begin{definition}\label{def:compdiags}Given a composition $\alpha=(\alpha _1, \ldots , \alpha _\ell)\vDash n$, we say the \emph{composition diagram} of $\alpha$, also denoted by $\alpha$, is the left-justified array of $n$ cells with $\alpha_i$ cells in the $i$-th row from the bottom. The cell in the $i$-th row from the bottom and $j$-th cell from the left is denoted by the pair $(i,j)$. 
\end{definition}

Our next task is to fill our composition diagrams with integers.

\begin{definition}\label{def:SYCT} Given a composition $\alpha$, a \emph{standard Young composition tableau} (abbreviated to SYCT) $\tau$ of \emph{shape} $\alpha$, denoted by $sh(\tau)=\alpha$, is a bijective filling of the cells of $\alpha$
$$\tau: \alpha \to [n]$$such that
\begin{enumerate}
\item the entries in each row are increasing when read from left to right,
\item the entries in the first column are increasing when read from the row with the smallest index to the row with the largest index,
\item if $i<j$ and $\tau(j,k)<\tau(i, k+1)$ then $\tau(j, k+1)$ exists and $\tau(j,k+1)<\tau(i,k+1)$.
\end{enumerate}\end{definition}
We use SYCTs to denote the set of all SYCTs where the context is clear.
The \emph{descent set} of an SYCT $\tau$ is defined to be
$$\des(\tau) = \{i \suchthat i+1 \mbox{ appears in the same column or a column to the left of } i\}.$$Given an SYCT $\tau$ with $n$ cells, we let $\comp(\tau)=\comp(\des(\tau))\vDash n$ and call it the \emph{descent composition} of $\tau$.

\begin{example}\label{ex:SYCT} An SYCT $\tau$ with $sh(\tau)= (2,4,1,2)$, $\des(\tau)=\{1,3,4,6\}$ and $\comp(\tau)=(1,2,1,2,3)$.
$$\tableau{7&8\\5\\2&3&6&9\\1&4}$$
\end{example}

\begin{remark} There is a natural bijection $\rhocy$ from the set of all SYCTs to standard Young tableaux that arise in the classical representation theory of the symmetric group: Given an SYCT $\tau$ we obtain $\rhocy(\tau)$ by sorting the entries in each column from bottom to top in increasing order and then bottom justifying the columns. For further details, see \cite{LMvW-book}. This bijection is analogous to the bijection due to Mason \cite{mason-1} for standard reverse tableaux.
\end{remark}


It is well-known that the symmetric group $\sgrp _n$ is the group of bijections $\sigma: [n] \to [n]$, called permutations, which we will denote in one-line notation $\sigma(1)\cdots\sigma(n)$. The symmetric group $\sgrp_n$ is also known as the Coxeter group of type $A$, $A_{n-1}$, whose standard generators are 
$s_i=1\cdots (i-1)(i+1)i(i+2)\cdots n$ for $1\leqslant i \leqslant n-1$.

The \emph{descent set} of a permutation $\sigma \in \sgrp _n$ is defined to be
$$\des(\sigma)=\{i\in [n-1] \suchthat \sigma(i)>\sigma(i+1)\}.$$In terms of Coxeter groups $i\in \des(\sigma)$ if and only if there is some minimal length word in terms of standard generators for $\sigma$ that ends in $s_i$. Given a permutation $\sigma \in \sgrp _n$ we let $\comp(\sigma)=\comp(\des(\sigma))\vDash n$ and call it the \emph{descent composition} of $\sigma$.

We now review a number of algebraic concepts. Throughout we let $\bK$ denote our coefficient ring, 
which under minimal conditions we can take to be $\bZ$ 
but which we ordinarily take to be $\bC$.

\section{The Hopf algebras of symmetric and quasisymmetric functions}\label{sec:SymandQsym}

We now recap some pertinent facts about the Hopf algebra of symmetric functions. Further details can be found in excellent texts such as \cite{fulton-1, macdonald-1, sagan, stanley-ec2}. The Hopf algebra of symmetric functions is a graded Hopf algbera
$$\Sym = \bigoplus _{n\geqslant 0} \Sym ^n \subseteq \bK[[x_1, x_2, \ldots ]]$$and the fundamental theorem of symmetric functions states that $\Sym$ is a polynomial algebra in the elementary symmetric functions:
$$\Sym \cong \bK[e_1, e_2, \ldots ]$$where the \emph{$r$-th elementary symmetric function} for $r>0$ is
$$e_r=\sum _{1\leqslant i_1 <i_2 <\cdots < i_r} x_{i_1}x_{i_2}\cdots x_{i_r}.$$It is also a polynomial algebra in the complete homogeneous symmetric functions:
$$\Sym \cong \bK[h_1, h_2, \ldots ]$$where the \emph{$r$-th complete homogeneous symmetric function} for $r>0$ is
$$h_r=\sum _{1\leqslant i_1 \leqslant i_2 \leqslant \cdots \leqslant i_r} x_{i_1}x_{i_2}\cdots x_{i_r} = \sum _{  (\alpha _1, \ldots, \alpha _\ell) \vDash r} (-1)^{\ell-r} e _{\alpha _ 1}\ldots e_{\alpha _\ell}.$$To obtain $\bK$-bases for $\Sym$ define for a partition $\lambda =(\lambda _1, \ldots , \lambda _\ell)$ the \emph{elementary} and \emph{complete homogeneous symmetric functions} to be, respectively,
$$e_\lambda = e_{\lambda _1}\cdots e_{\lambda _\ell}$$
$$h_\lambda = h_{\lambda _1}\cdots h_{\lambda _\ell}.$$From here the fundamental theorem of symmetric functions implies that $\Sym ^n$ for $n>0$ has either $\{e_\lambda\} _{\lambda \vdash n}$ or $\{h_\lambda\} _{\lambda \vdash n}$ as a $\bK$-basis. We define $e_0=h_0=1$ and $e_r=h_r=0$ for $r<0$.

However, the most renowned $\bK$-basis for $\Sym$ is that consisting of Schur functions, which can be defined in a myriad of ways including as quotients, as irreducible representations of a symmetric group, or as generating functions for tableaux, but for our purposes we will define them using the following
\emph{Jacobi-Trudi determinant}. 

\begin{definition}\label{def:Schur}Given a partition $\lambda = (\lambda _1, \ldots , \lambda _\ell)$ the \emph{Schur function} $s_\lambda$ is defined to be   $s_0=1$ and otherwise
$$s_\lambda = \det (h_{\lambda _i -i+j})_{i,j=1}^\ell.$$\end{definition}
Again we have $\{s_\lambda\} _{\lambda \vdash n}$ is a $\bK$-basis for $\Sym ^n$.

The Hopf algebra of symmetric functions $\Sym$ is contained in the graded Hopf algebra of quasisymmetric  functions $\Qsym$
$$\Qsym = \bigoplus _{n\geqslant 0} \Qsym ^n \subseteq \bK[[x_1, x_2, \ldots ]]$$where $\Qsym ^n$ for $n>0$ is spanned by the $\bK$-basis of \emph{monomial quasisymmetric functions} $\{ M_\alpha \} _{\alpha \vDash n}$ given by
$$M_{\alpha}=\sum _{i_1<\cdots < i_\ell}x_{i_1}^{\alpha_1}\cdots x_{i_\ell}^{\alpha_\ell}$$and is also spanned by the $\bK$-basis of \emph{fundamental quasisymmetric functions} $\{ F_\alpha \} _{\alpha \vDash n}$ given by
$$F_{\alpha}=\sum_{\beta\preccurlyeq\alpha}M_{\beta}$$and $M_0=F_0=1$.

In \cite{LMvW-book} another $\bK$-basis for $\Qsym ^n$ was introduced, that of {Young quasisymmetric Schur functions} $\{ \qsy _\alpha \} _{\alpha \vDash n}$ defined combinatorially as follows.

\begin{definition}\label{def:qsy}\cite[Proposition 5.2.2]{LMvW-book} Let $\alpha \vDash n$. Then the \emph{Young quasisymmetric Schur function} $\qsy _\alpha$ is given by $\qsy _0=1$ and otherwise
\begin{equation}\label{eq:qsy}\qsy _\alpha = \sum _\beta \hat{d}_{\alpha \beta} F_\beta\end{equation}where the sum is over all compositions $\beta \vDash  n$ and $\hat{d}_{\alpha\beta}=$ the number of $SYCTs$ $\tau$ of shape $\alpha$ such that $\des (\tau)=\set (\beta)$. 
\end{definition}

This basis is related to the $\bK$-basis for $\Qsym ^n$ known as the basis of \emph{quasisymmetric Schur functions} $\{ \qs _\alpha \} _{\alpha \vDash n}$ \cite{BLvW, HLMvW-1, HLMvW-2} by the involutive automorphism of $\Qsym$ that maps $F_\alpha \mapsto F_{\reverse{\alpha}}$, which maps
$\qsy _\alpha \mapsto \qs _{\reverse{\alpha}}$. Thus, quasisymmetric Schur functions can be defined combinatorially as follows. 

\begin{definition}\label{def:qs} Let $\alpha \vDash n$. Then the \emph{quasisymmetric Schur function} $\qs _\alpha$ is given by $\qs _0 =1$ and otherwise
\begin{equation}\label{eq:qs}\qs _\alpha = \sum _\beta \hat{d}_{\reverse{\alpha} \reverse{\beta}} F_\beta\end{equation}where the sum is over all compositions $\beta \vDash  n$ and $\hat{d}_{\alpha\beta}=$ the number of $SYCTs$ $\tau$ of shape $\alpha$ such that $\des (\tau)=\set (\beta)$. 
\end{definition}

\begin{example}\label{ex:qsqsy} We have $\qsy _{(1,3)}= F_{(1,3)}$ from the SYCT $$\tableau{2&3&4\\1}$$while $\qs _{(1,3)}= F_{(1,3)}+F_{(2,2)}$ from the following SYCTs.
$$\tableau{4\\1&2&3}\qquad \tableau{3\\1&2&4}$$
\end{example}

 \begin{remark} These functions are named (Young) quasisymmetric Schur functions because they refine Schur functions in a natural way, that is,
 $$s_\lambda = \sum _{\partitionof{\alpha}=\lambda} \qsy _\alpha = \sum _{\partitionof{\alpha}=\lambda} \qs _\alpha$$and additionally refine many properties of Schur functions \cite{BLvW, HLMvW-1, HLMvW-2, LMvW-book}.
 \end{remark}

\section{Classical character theory}\label{sec:chars}

The basics of representation theory for finite groups can be found in texts such as \cite{curtis-reiner}, and for the symmetric groups in particular in texts such as \cite{sagan}.
For a finite group $G$, a finite dimensional representation of $G$ is a group homomorphism $\rho:G\to Gl(V)$ for some finite dimensional $\bK$-vector space $V$.
We can identify $\rho$ with $V$ viewed as a $G$-module under the  group action
\[ g\cdot v = \rho(g)(v) \qquad \forall \; g\in G,\, v\in V. \]
The \emph{character} of $\rho$ is the map $G\to\bK$ given by the trace function, $tr(\rho(g))$.
The set of irreducible characters of $G$ (that is, the characters of the irreducible modules) is denoted $\irr(G)$, and every character of $G$ is a positive $\bZ$-linear combination of irreducible characters.
Since non-isomorphic irreducible $G$-modules have distinct characters, $\irr(G)$ is in bijection with the set of isomorphism classes of irreducible $G$-modules.

The Grothendieck ring $\ralg(G)$ of finite dimensional representations of $G$ has as basis the set of isomorphism classes of irreducible $G$-modules, that is $\ralg(G)\cong\bK\,\irr(G)$ as a vector space,
with operations 
\[ [M]+[N]=[M\oplus N],  \qquad \text{and} \qquad  [M]*[N] = [M\otimes N]. \]
The algebra of class functions of $G$ is
\[ \calg(G) = \{ \phi:G\to\bK \suchthat \phi \text{ constant on conjugacy classes of } G \}, \]
under pointwise addition and multiplication.
The set $\irr(G)$ forms a $\bZ$-basis of $\calg(G)$.
In fact $\ralg(G)\cong\calg(G)$, the isomorphism being given by the trace function, $\tr:\ralg(G)\to\calg(G)$.

In the case of the symmetric groups, 
the conjugacy classes of $\sgrp_n$ are indexed by partitions $\lambda \vdash n$ corresponding to the cycle types of permutations.
Hence the irreducible  modules, called \emph{Specht} modules, $S^\lambda$ of $\sgrp_n$ are also indexed by partitions $\lambda \vdash n$.  We denote the irreducible characters by $\chi^\lambda = tr(S^\lambda)$.
There is a natural embedding of $\sgrp_m\times\sgrp_n$ into $\sgrp_{m+n}$, and so considering the direct sum of their representation rings,
\[ \ralg=\bigoplus_{n\ge 0} \ralg(\sgrp_n), \]
one can define a graded outer product on $\ralg$ using induction, that is,
 \[   [M]\cdot[N] = \left[\ind_{\sgrp_m\times\sgrp_n}^{\sgrp_{m+n}} M\otimes N \right] 
      \qquad \text{ for } [M] \in \ralg(\sgrp_m),\;[N] \in \ralg(\sgrp_n). \]
By identification via the trace function, this induces an isomorphic algebraic structure on the direct sum of class algebras, \[ \calg =\bigoplus_{n\ge 0} \calg(\sgrp_n). \]
It is well-known \cite[\S4.7]{sagan} that $\calg$ is isomorphic to $\Sym$, the algebra of symmetric functions, the isomorphism being given by the Frobenius characteristic map $\ch:\calg\to\Sym$ determined by \begin{equation}\label{eq:chmap}\ch(\chi^\lambda)=s_\lambda\end{equation}
that is, the irreducible character indexed by $\lambda$ is mapped to the Schur function indexed by $\lambda$.

The characteristic map can also be described in terms of \emph{Young characters} \cite[p. 6]{bless-schock}.
\label{sec:Walpha}
Let $\alpha =(\alpha_1,\ldots,\alpha_\ell)\vDash n$ and $\alpha=\comp(J)$, where $J\subseteq[n-1]$.
Letting $S=\{s_1,\ldots,s_{n-1}\}$ be the  set of standard generators for $W=\sgrp_n$ as a Coxeter group, we let $J^c$ be the complement of $J$ in $[n-1]$, 
and let $W_\alpha = W_{J^c}$ be the Young subgroup of $\sgrp_n$ generated by $\{s_j \suchthat j\in J^c\}$, which we identify with $\sgrp_{\alpha_1} \times \cdots\times \sgrp_{\alpha_\ell}$.
The Young character $\ych{\alpha}$ is defined to be the character of $\sgrp_n$ induced by the trivial character of $W_\alpha$, that is,
\[ \ych{\alpha}= \ind^{\sgrp_n}_{W_\alpha} 1 .\]
Note that $\ych{\alpha}=\ych{\beta}$ if $\partitionof{\alpha}=\partitionof{\beta}$, that is, if $\alpha$ and $\beta$ have the same underlying partition.
Furthermore,  $\ch(\ych{\alpha}) = h_\lambda$, where $\lambda=\partitionof{\alpha}$ and $h_\lambda$ is the complete homogeneous symmetric function indexed by $\lambda$ \cite[p. 114]{macdonald-1}.

\section{The permutation and descent algebras, and the Hopf algebra of noncommutative symmetric functions} \label{sec:palg}

For any Coxeter group $W$, Solomon \cite{solomon} defined a sublagebra  
of the group ring called the \emph{descent algebra of $W$}.
For the symmetric group $\sgrp_n$, we denote its group ring by $\palg_n= \bK\sgrp_n$, and its descent algebra $\dalg_n\subset \palg_n$.
To any $I\subseteq[n-1]$ we associate the element $\dah_I\in\palg_n$ (respectively, $\dar_I\in\palg_n$) that is the formal sum of all permutations in $\sgrp_n$ whose descent set is contained in (respectively, is equal to) $I$, that is,
\[  \dah_I = \sum_{\des(\sigma)\subseteq I} \sigma, \qquad \dar_I = \sum_{\des(\sigma)= I} \sigma. \]
Then $\{\dah_I\}_{I\subseteq[n-1]}$ and $\{\dar_I\}_{I\subseteq[n-1]}$ respectively are $\bZ$-bases of $\dalg_n$.  For ease of computation we write $\dah_\alpha = \dah_I$ and $\dar_\alpha = \dar_I$ when $n$ is understood from context and $\comp(I)=\alpha$.
If we let $W^\alpha$ denote the set of minimal length representatives of the left cosets of $W_\alpha$ in $\sgrp_n$, then 
note that $W^\alpha=\{\pi\in\sgrp_n \suchthat \des(\pi)\subseteq \set(\alpha) \}$. Thus $\dah_\alpha = \sum_{\sigma\in W^\alpha} \sigma$.

Using the convention that $\dalg_0=\palg_0=\bK$, we write
\[ \palg = \bigoplus_{n\geqslant 0} \palg_n, \qquad \text{and} \qquad  \dalg = \bigoplus_{n\geqslant 0} \dalg_n. \]

In relation to the representation theory of the $0$-Hecke algebra we   have the Hopf algebra of noncommutative symmetric functions, and a summary of connections can be found in \cite{thibon-lect}. We describe it here in order to both algebraically and combinatorially define two key objects of study, noncommutative Schur functions and Young noncommutative Schur functions. 

The Hopf algebra of \emph{noncommutative symmetric functions}\cite{GKLLRT} is a graded Hopf algebra
$$\Nsym = \bigoplus _{n\geqslant 0} \Nsym ^n \cong \bK\langle \nce_1, \nce_2, \ldots \rangle$$where we set $\nce _0=1$ and the $\nce _r$ for $r>0$ are noncommuting indeterminates of degree $r$. We call $\nce _r$ the \emph{$r$-th noncommutative elementary symmetric function}. Defining the \emph{$r$-th noncommutative complete homogeneous symmetric function} to be
$$\nch _r=\sum _{(\beta_1,\beta_2, \ldots , \beta_m) \vDash r} (-1)^{m-n} \nce _{\beta_1}\nce _{\beta_2} \cdots \nce _{\beta_m}$$Gelfand et al. showed in \cite{GKLLRT} that
$$\Nsym \cong \bK\langle \nch_1, \nch_2, \ldots \rangle$$and that for a composition $\alpha=(\alpha _1, \ldots , \alpha _\ell)$ if
$$\nce_\alpha = \nce_{\alpha _1}\cdots \nce_{\alpha _\ell}$$
$$\nch_\alpha = \nch_{\alpha _1}\cdots \nch_{\alpha _\ell}$$then $\Nsym ^n$ for $n>0$ has either $\{\nce_\alpha\} _{\alpha \vDash n}$ or $\{\nch_\alpha\} _{\alpha \vDash n}$ as a $\bK$-basis. Another basis for $\Nsym ^n$ is the basis of \emph{noncommutative ribbon Schur functions} $\{\ncr_\alpha\} _{\alpha \vDash n}$, defined for every composition $\alpha=(\alpha _1, \ldots , \alpha _\ell)$ to be
$$\ncr _\alpha = (-1) ^\ell \sum _{\alpha \refines (\beta _1, \ldots , \beta _k)}(-1)^k \nch _{\beta _1} \cdots \nch _{\beta _k}.$$It was shown in \cite[Section 6]{GKLLRT} that $\Qsym$ is the graded Hopf  dual to $\Nsym$ via the pairing of dual bases
\begin{equation}\label{eq:dual}
\langle M_\alpha, \nch _\beta \rangle = \langle F_\alpha, \ncr _\beta \rangle = \delta _{\alpha\beta}
\end{equation}where $\delta _{\alpha\beta} = 1$ if $\alpha = \beta$ and $0$ otherwise.
Meanwhile in \cite{malv-reut-1} it was shown that $\Qsym$ is the graded Hopf algebra dual to $\dalg$. Therefore $\Nsym$ and $\dalg$ are isomorphic. One such isomorphism 
$$\psi: \dalg \to \Nsym$$is
given by $\psi(\dah _\alpha)= \nch _\alpha$, or equivalently $\psi(\dar _\alpha) = \ncr _\alpha$. There is also a homomorphism of algebras
$$\Theta: \Nsym \to \Sym$$called the \emph{forgetful map}, which is given by $\Theta (\nch _r)=h_r$.

In \cite{BLvW} the authors introduced a new $\bZ$-basis for $\Nsym ^n$ for $n>0$, termed the basis of {noncommutative Schur functions}, which we denote by $\{\ncs_\alpha\} _{\alpha\vDash n}$ and can be defined algebraically as follows.

\begin{definition}\cite[Definition 5.5.1]{LMvW-book}\label{def:dualqs} Let $\alpha,\beta \vDash n$. Then the \emph{noncommutative Schur function} $\ncs _\beta$ is defined by
\begin{equation}\label{eq:dualqs}
\langle \qs _\alpha, \ncs _\beta \rangle = \delta _{\alpha\beta}.
\end{equation}
\end{definition}
There is an involutive anti-automorphism of $\Nsym$ which maps $\ncr_\alpha\mapsto\ncr_{\reverse{\alpha}}$.
We define the image of the basis $\{\ncs_\alpha\} _{\alpha\vDash n}$ under this involution by
 $\ncs_{\reverse{\alpha}} \mapsto \ncsy_\alpha$ and note by construction that this new basis $\{\ncsy_\alpha\} _{\alpha\vDash n}$, termed the basis of Young noncommutative Schur functions, can be defined algebraically as follows.
 
\begin{definition}\cite[Definition 5.6.1]{LMvW-book}
Let $\alpha,\beta \vDash n$. Then the \emph{Young noncommutative Schur function} $\ncsy _\beta$ is defined by
\begin{equation}\label{eq:dualqsy}
\langle \qsy _\alpha, \ncsy _\beta \rangle = \delta _{\alpha\beta}.
\end{equation}
\end{definition}

We can define both these types of functions combinatorially implicitly as every noncommutative ribbon Schur function  is a linear combination of (Young) noncommutative Schur functions with nonnegative integer coefficients. Specifically, if $\alpha \vDash n$, then by Equation~\eqref{eq:dual} and Equation~\eqref{eq:qsy}, respectively Equation~\eqref{eq:dual} and Equation~\eqref{eq:qs}, we have
\begin{equation}  \label{eq:Rtosr}
 \ncr_\alpha =  \sum_{\beta \vDash n}  \hat{d}_{\beta\alpha} \ncsy_\beta = \sum_{\beta \vDash n}  \hat{d}_{\reverse{\beta}\reverse{\alpha}} \ncs_\beta,
\end{equation}
where 
\begin{equation}
 \hat{d}_{\beta\alpha} = \# \{ \tau\in SYCTs \suchthat sh(\tau)= \beta,\, \comp(\tau)= \alpha \}.
\end{equation}

These functions are pre-images of  Schur functions under the forgetful map, that is by  \cite[Equation (5.8)]{LMvW-book} and \cite[Equation (2.12)]{BLvW},
\begin{equation} \label{eq:ThetaNcsr}
\Theta(\ncsy_\alpha)=\Theta(\ncs_\alpha)=s_{\partitionof{\alpha}}.
\end{equation}

%
%
\section{Noncommutative character theory}\label{sec:nc-char}

In his original paper \cite{solomon} where descent algebras were defined, Solomon showed that there is a ring homomorphism $\theta_W$ from the descent algebra of a finite Coxeter group $W$ into $\calg(W)$.
In the case of type A, $\theta_n:\dalg_n\to\calg(\sgrp_n)$ is surjective; specificallly, $\theta_n(\dah _\alpha)=\ych{\alpha}$ for $\alpha\vDash n$.

The maps $\theta_n$ collectively extend to a homomorphism of graded algebras  $\theta:\dalg\to\calg$ \cite[Thm. 1.2, Cor. 7.6]{bless-schock}.
Now for a composition $\alpha$
$$\ch (\theta ( \dah _\alpha )) = \ch (\ych{\alpha}) = h _{\partitionof{\alpha}} = \Theta (\nch _\alpha) = \Theta(\psi (\dah _\alpha ))$$that is, 
\begin{equation}\label{eq:mapcomm}\theta =  \ch^{-1}\circ \Theta\circ\psi .\end{equation}This can be described by the following commutative diagram.

\[ \begin{CD}
   \dalg @> \theta >>  \calg \\
   @V\psi  VV   @VV \ch V \\
   \Nsym @> \Theta>> \Sym 
   \end{CD} \]

\medskip
The map $\theta$ extends to a homomorphism of graded algebras $\theta:\palg\to\calg$.   
We refer the reader to \cite[Ch. 7]{bless-schock} for the details.
Recall that the irreducible characters $\{\chi^\lambda\}$ of the symmetric groups form a $\bZ$-basis of $\calg$,
and every character of a symmetric group is a positive $\bZ$-linear combination of irreducible characters. 
We now come to a key definition.

\begin{definition}\cite[p. 13]{bless-schock}\label{def:noncommchar}
Any inverse image under $\theta$ in $\palg$ of a character $\chi$ afforded by the $\sgrp_n$-module $M$ is called a \emph{noncommutative character} corresponding to $\chi$, or $M$.
\end{definition}

At this point we have  collected all our tools and made further useful deductions in the process. As a consequence, our main theorem is now straightforward to prove.

\begin{theorem}\label{the:noncommirrchars} The bases of $\Nsym ^n$ of Young noncommutative Schur functions $\{\ncsy _\alpha\} _{\alpha \vDash n}$ and of noncommutative Schur functions $\{\ncs _\alpha\} _{\alpha \vDash n}$ are each  isomorphic to a set of noncommutative irreducible characters of $\sgrp _n$. 

More precisely, every element of the set
$$\{ \psi ^{-1}(\ncsy _\alpha) \suchthat \alpha \vDash n \}$$is a noncommutative irreducible character of $\sgrp _n$. Similarly,  every element of the set
$$\{ \psi ^{-1}(\ncs _\alpha) \suchthat \alpha \vDash n \}$$is a  noncommutative irreducible character of $\sgrp _n$. 
\end{theorem}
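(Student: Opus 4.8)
The plan is to trace the images of the two bases through the commutative square of Hopf-algebra and class-algebra maps assembled in Section~\ref{sec:nc-char}, and show that each basis element is sent by $\theta\circ\psi^{-1}$ to an irreducible character of $\sgrp_n$ up to sign. First I would recall Equation~\eqref{eq:mapcomm}, $\theta = \ch^{-1}\circ\Theta\circ\psi$, so that $\theta(\psi^{-1}(\ncsy_\alpha)) = \ch^{-1}(\Theta(\ncsy_\alpha))$ and likewise for $\ncs_\alpha$. By Equation~\eqref{eq:ThetaNcsr} we have $\Theta(\ncsy_\alpha) = \Theta(\ncs_\alpha) = s_{\partitionof{\alpha}}$, and by Equation~\eqref{eq:chmap} the Frobenius characteristic map sends $\chi^{\partitionof{\alpha}} \mapsto s_{\partitionof{\alpha}}$, hence $\ch^{-1}(s_{\partitionof{\alpha}}) = \chi^{\partitionof{\alpha}}$, the irreducible character of $\sgrp_n$ indexed by $\partitionof{\alpha}$. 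Thus $\theta(\psi^{-1}(\ncsy_\alpha)) = \chi^{\partitionof{\alpha}} = \theta(\psi^{-1}(\ncs_\alpha))$.

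Next I would invoke Definition~\ref{def:noncommchar}: a noncommutative character corresponding to a character $\chi$ afforded by an $\sgrp_n$-module $M$ is precisely an element of $\theta^{-1}(\chi) \cap \palg_n$ (equivalently, within $\palg$). Since $\psi:\dalg\to\Nsym$ is an isomorphism, $\psi^{-1}(\ncsy_\alpha)$ and $\psi^{-1}(\ncs_\alpha)$ are well-defined elements of $\dalg_n \subseteq \palg_n$, and by the previous paragraph they lie in $\theta^{-1}(\chi^{\partitionof{\alpha}})$. Because $\chi^{\partitionof{\alpha}}$ is afforded by the Specht module $S^{\partitionof{\alpha}}$, which is irreducible, $\chi^{\partitionof{\alpha}}$ is an irreducible character; hence $\psi^{-1}(\ncsy_\alpha)$ and $\psi^{-1}(\ncs_\alpha)$ are each, by definition, noncommutative irreducible characters of $\sgrp_n$. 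This gives both displayed assertions, and the statement that the two bases are ``isomorphic to a set of noncommutative irreducible characters'' follows since $\psi^{-1}$ is a vector-space isomorphism carrying the basis $\{\ncsy_\alpha\}_{\alpha\vDash n}$ (respectively $\{\ncs_\alpha\}_{\alpha\vDash n}$) onto the set $\{\psi^{-1}(\ncsy_\alpha)\}_{\alpha\vDash n}$ (respectively $\{\psi^{-1}(\ncs_\alpha)\}_{\alpha\vDash n}$) of noncommutative irreducible characters.

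I expect essentially no serious obstacle here: every ingredient --- the commutative diagram, $\Theta(\ncs_\alpha)=\Theta(\ncsy_\alpha)=s_{\partitionof\alpha}$, $\ch(\chi^\lambda)=s_\lambda$, and the definition of noncommutative character --- has already been set up in the excerpt, so the argument is a matter of composing known maps. The only point requiring a sentence of care is that Definition~\ref{def:noncommchar} refers to inverse images in $\palg$ rather than in $\dalg$, so I would note explicitly that $\psi^{-1}(\ncsy_\alpha), \psi^{-1}(\ncs_\alpha) \in \dalg_n \subseteq \palg_n$ and that $\theta:\palg\to\calg$ restricts on $\dalg$ to the map used above, so the computation of $\theta(\psi^{-1}(\ncsy_\alpha))$ is unambiguous. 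A closing remark could observe that, since $\Theta(\ncs_\alpha)=\Theta(\ncsy_\alpha)=s_{\partitionof\alpha}$ depends only on $\partitionof\alpha$, many distinct basis elements map to the same irreducible character --- that is, the correspondence is far from injective --- which is exactly the noncommutative refinement phenomenon the paper is highlighting.
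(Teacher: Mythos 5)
Your proposal is correct and follows essentially the same route as the paper's proof: apply $\theta\circ\psi^{-1}=\ch^{-1}\circ\Theta$ from Equation~\eqref{eq:mapcomm}, use $\Theta(\ncsy_\alpha)=\Theta(\ncs_\alpha)=s_{\partitionof{\alpha}}$ and $\ch(\chi^{\partitionof{\alpha}})=s_{\partitionof{\alpha}}$ to conclude $\theta(\psi^{-1}(\ncsy_\alpha))=\theta(\psi^{-1}(\ncs_\alpha))=\chi^{\partitionof{\alpha}}$, and invoke Definition~\ref{def:noncommchar}. (The phrase ``up to sign'' in your opening sentence is unnecessary --- the image is exactly the irreducible character, as your own computation shows.)
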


\begin{proof} Let $\alpha \vDash n$, $\lambda = \partitionof{\alpha} \vdash n$ and $\chi ^\lambda$ be the corresponding irreducible character of $\sgrp _n$.  By Equation~\eqref{eq:chmap} 
$$\ch (\chi ^\lambda) = s_\lambda$$ and by Equation~\eqref{eq:ThetaNcsr}  
$$\Theta(\ncsy _\alpha)= s_\lambda.$$Therefore, since by Equation~\eqref{eq:mapcomm} $\theta =  \ch^{-1}\circ \Theta\circ\psi$, and so
$\theta \circ \psi ^{-1} = \ch ^{-1} \circ \Theta$ we have that
$$\theta(\psi^{-1}(\ncsy _\alpha))=\ch ^{-1} (\Theta (\ncsy _\alpha))= \ch ^{-1} (s _\lambda) = \chi ^\lambda.$$
Similarly by Equation~\eqref{eq:chmap}   and Equation~\eqref{eq:ThetaNcsr}   we have that $$\theta(\psi^{-1}(\ncs _\alpha)) = \chi ^\lambda$$ and therefore the theorem follows by Definition~\ref{def:noncommchar}.
\end{proof}

\begin{remark}\label{rem:minset} Both these bases for $\Nsym ^n$ satisfy the strong property that \emph{every} basis element is isomorphic to a noncommutative irreducible character of $\sgrp _n$. One might ask the question of which bases $\{\ncb _\alpha\} _{\alpha \vDash n}$ of $\Nsym ^n$ satisfy  the weaker condition that some subset of the basis is isomorphic to a set of noncommutative irreducible characters of $\sgrp _n$. As we can see from the above proof a necessary and sufficient condition for this is that there exists for every partition $\lambda \vdash n$ at least one $\alpha \vDash n$ such that
$$\Theta (\ncb_\alpha) = s _\lambda.$$To this end, the ``immaculate'' basis of  $\Nsym ^n$, $\{I _\alpha\} _{\alpha \vDash n}$,  satisfies $\Theta (I_\lambda)=s_\lambda$ \cite[Corollary 3.26]{BBSSZ-1}, and so is an  example of a basis that satisfies this weaker condition.
\end{remark}

%

\bibliography{NSrepV0}

\end{document}